\newtheorem{theorem}{Theorem}[section]
\newtheorem{lemma}{Lemma}[section]
\title[Arithmetic progressions of squares, cubes and $n$-th powers]
{Arithmetic progressions of squares, cubes and $n$-th powers}
\author{$\mbox{L. Hajdu}^1$, $\mbox{Sz. Tengely}^2$}
\thanks{1) Research supported in part by the J\'anos Bolyai Research
Fellowship of the Hungarian Academy of Sciences, by the OTKA grants
T48791 and T67580, and by the National Office for Research and
Technology.
\hfill\break\indent
2) Research supported in part by the Magyary Zolt\'an Higher
Educational Public Foundation}
\subjclass[2000]{11D61, 11Y50}
\keywords{perfect powers, arithmetic progressions}
\address{Institute of Mathematics\newline
 \indent University of Debrecen\newline
 \indent and the Number Theory Research Group\newline
 \indent of the Hungarian Academy of Sciences\newline
 \indent P.O.Box 12\newline
 \indent 4010 Debrecen\newline
 \indent Hungary}
\email{hajdul@math.klte.hu, tengely@math.klte.hu}
\begin{document}

\begin{abstract}
In this paper we continue the investigations about unlike powers in
arithmetic progression. We provide sharp upper bounds for the length
of primitive non-constant arithmetic progressions consisting of
squares/cubes and $n$-th powers.
\end{abstract}

\maketitle

\section{Introduction}
It was claimed by Fermat and proved by Euler (see \cite{Dickson} pp.
440 and 635) that four distinct squares cannot form an arithmetic
progression. It was shown by Darmon and Merel \cite{intDM} that,
apart from trivial cases, there do not exist three-term arithmetic
progressions consisting of $n$-th powers, provided $n\geq 3$. An
arithmetic progression $a_1,a_2,\hdots,a_t$ of integers is called
primitive if $\gcd(a_1,a_2)=1$. A recent result of Hajdu \cite{Lajos}
implies that if
\begin{equation}
\label{mainap}
x_1^{l_1},\hdots,x_t^{l_t}
\end{equation}
is a primitive arithmetic progression in $\mathbb{Z}$ with
$2\leq l_i\leq L$ $(i=1,\hdots,t)$, then $t$ is bounded by some
constant $c(L)$ depending only on $L$. Note that $c(L)$ is effective,
but it is not explicitly given in \cite{Lajos}, and it is a very
rapidly growing function of $L$.

An the other hand, it is known (see e.g. \cite{Mo}, \cite{DaGr},
\cite{T} and the references given there) that there exist exponents
$l_1,l_2,l_3\geq 2$ for which there are infinitely many primitive
arithmetic progressions of the form \eqref{mainap}. In this case the
exponents in question satisfy the condition
$$
\frac{1}{l_1}+\frac{1}{l_2}+\frac{1}{l_3}\geq 1.
$$
In \cite{BGyHT} Bruin, Gy\H{o}ry, Hajdu and Tengely among other things
proved that for any $t\geq 4$ and $L\geq 3$ there are only finitely
many primitive arithmetic progressions of the form \eqref{mainap} with
$2\leq l_i\leq L$ $(i=1,\ldots,t)$. Furthermore, they showed that in
case of $L=3$ we have $x_i=\pm 1$ for all $i=1,\ldots,t$.

The purpose of the present paper is to give a good, explicit upper bound
for the length $t$ of the progression \eqref{mainap} under certain
restrictions. More precisely, we consider the cases when the set of
exponents is given by $\{2,n\}$, $\{2,5\}$ and $\{3,n\}$, and (excluding
the trivial cases) we show that the length of the progression is at most
six, four and four, respectively.

\section{Results}

\begin{theorem}
\label{thm2n}
Let $n$ be a prime and $x_1^{l_1},\hdots,x_t^{l_t}$ be a primitive
non-constant arithmetic progression in $\mathbb{Z}$ with $l_i\in\{2,n\}$
$(i=1,\hdots,t)$. Then we have $t\leq 6$. Further, if $t=6$ then
\begin{equation*}
(l_1,l_2,l_3,l_4,l_5,l_6)=(2,n,n,2,2,2),(2,2,2,n,n,2).
\end{equation*}
\end{theorem}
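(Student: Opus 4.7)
The plan is to reduce the problem to one about the pattern of exponents $(l_1,\ldots,l_t) \in \{2,n\}^t$, and to eliminate all patterns except the two listed using a short catalogue of Diophantine facts. Set $S = \{i : l_i = 2\}$ and $N = \{i : l_i = n\}$. Because $x_1^{l_1},\ldots,x_t^{l_t}$ is an AP, any three positions $i_1 < i_2 < i_3$ in arithmetic progression give $x_{i_1}^{l_{i_1}} + x_{i_3}^{l_{i_3}} = 2 x_{i_2}^{l_{i_2}}$, and any four equally spaced positions give a four-term AP among the $x_i^{l_i}$ themselves.

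The Diophantine tools I would assemble are the following. Fermat--Euler rules out four squares in a non-constant arithmetic progression, so $S$ contains no four-term AP inside $\{1,\ldots,t\}$. Darmon--Merel \cite{intDM} gives that $X^n + Y^n = 2 Z^n$ has only the trivial primitive solutions $X = Y = Z$ for primes $n \geq 3$, so $N$ contains no three-term AP. The remaining work rests on results about mixed-signature ternary equations
\[
X^2 + Y^2 = 2 Z^n, \qquad X^n + Y^n = 2 Z^2, \qquad X^2 + Y^n = 2 Z^2, \qquad X^2 + Y^n = 2 Z^n,
\]
which arise whenever an equally spaced triple has a mixed exponent signature such as $(2,n,2)$, $(n,2,2)$, $(n,n,2)$ or $(2,n,n)$. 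For a general prime $n$ the modular method yields essentially complete descriptions of the primitive solutions of these equations, and in each configuration that actually occurs in our patterns the list of solutions is incompatible with the AP being primitive and non-constant.

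To obtain $t \leq 6$, I would suppose $t = 7$ and enumerate the candidate pairs $(S,N)$ in $\{1,\ldots,7\}$ satisfying the two ``no AP'' constraints above. A direct combinatorial check shows that any three-term-AP-free subset of $\{1,\ldots,7\}$ has size at most $4$, so $|S| \geq 3$; and the four-term APs $\{1,2,3,4\}, \{2,3,4,5\}, \{3,4,5,6\}, \{4,5,6,7\}$ and $\{1,3,5,7\}$ then impose $|S| \leq 5$. A short case analysis over the surviving patterns produces, in each case, an equally spaced triple of mixed signature to which a ternary equation result applies and forces a contradiction with primitivity or non-constancy. The same enumeration in $\{1,\ldots,6\}$ eliminates every pattern except $(2,n,n,2,2,2)$ and its reverse $(2,2,2,n,n,2)$, which escape every available obstruction.

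The main obstacle is ensuring that the mixed-signature ternary equations above are sharply solved for every prime $n$ needed. The signatures $(n,n,2)$ and $(2,2,n)$ are largely covered by the modular method (Bennett--Skinner, Bennett--Vatsal--Yazdani, Bruin, and successors), but individual Frey-curve arguments and a separate treatment of small exponents $n$ are typically required in order to close the cases that still admit sporadic non-trivial primitive solutions. The combinatorial side of the argument is essentially routine once the equation inputs are in place; the delicate step is matching each surviving pattern with an equation whose known solution set is sharp enough to eliminate it while preserving information about primitivity.
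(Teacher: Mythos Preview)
Your outline has a genuine gap: three of the four ternary equations you rely on are \emph{not} solved for general primes $n$, and in fact admit infinitely many nontrivial primitive solutions. The modular/Frey-curve method needs at least two of the three exponents equal to the prime $p$ in order to attach a mod-$p$ Galois representation; this is why $X^n+Y^n=2Z^2$ (signature $(n,n,2)$, your $(n,2,n)$ triple) is tractable, but $X^2+Y^2=2Z^n$, $X^2+Y^n=2Z^2$ and $X^2+Y^n=2Z^n$ are not. Concretely, $X^2+Y^2=2Z^n$ has infinitely many coprime solutions for every $n$ (any $Z$ that is a sum of two squares works, via Gaussian integers), and $X^2+Y^3=2Z^2$ already has $(X,Y,Z)=(1,23,78)$; so you cannot hope to exclude exponent triples like $(2,n,2)$ or $(2,2,n)$ by a ternary result. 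Your combinatorial reduction therefore stalls: for instance, the length-$7$ pattern $(2,2,n,2,2,n,2)$ survives both your AP constraints on $S$ and $N$ and contains no $(n,\ast,n)$ equally spaced triple, so nothing in your toolkit eliminates it.

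The paper circumvents exactly this obstacle in two ways you do not mention. First, for certain five-term patterns it uses a linear relation among \emph{non}-equally-spaced terms to land back in signature $(n,n,2)$: from $(l_1,\dots,l_5)=(n,2,2,n,2)$ one has $4x_4^n-x_1^n=3x_5^2$, an instance of $X^n+4Y^n=3Z^2$ covered by Bennett--Skinner/Bruin (Lemma~\ref{nn2}). Second, and more decisively, for patterns like $(2,2,n,2,2)$ and $(2,2,n,n,2,2)$ where no such trick is available, it ignores the $n$-th powers entirely and uses the four squares to produce a rational point on a fixed elliptic curve over $\mathbb{Q}$, e.g.\ $(1+X)(1+3X)(1+4X)=Y^2$ with $X=d/x_1^2$; a rank/torsion computation in Magma then rules the pattern out \emph{uniformly in $n$}. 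This $n$-independent elliptic-curve step is the missing idea in your sketch; without it (or an equivalent device) the argument cannot close, because the mixed-signature equations you invoke for the residual cases are genuinely open.
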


In the special case $n=5$ we are able to prove a sharper result.

\begin{theorem}
\label{thm25}
Let $x_1^{l_1},\hdots,x_t^{l_t}$ be a primitive non-constant arithmetic
progression in $\mathbb{Z}$ with $l_i\in\{2,5\}$ $(i=1,\hdots,t)$. Then
we have $t\leq 4$. Further, if $t=4$ then
$$
(l_1,l_2,l_3,l_4)=(2,2,2,5),(5,2,2,2).
$$
\end{theorem}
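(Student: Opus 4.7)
The plan is to bootstrap from Theorem~\ref{thm2n} with $n=5$, which gives $t\leq 6$ at once and restricts the exponent pattern in the $t=6$ case to $(2,5,5,2,2,2)$ or $(2,2,2,5,5,2)$. What remains is to rule out $t=6$ altogether, to cut down the admissible $t=5$ patterns, and to show that among $t=4$ patterns only $(2,2,2,5)$ and $(5,2,2,2)$ can occur.

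For a fixed exponent vector $(l_1,\ldots,l_t)$ I would write $x_i^{l_i}=a+(i-1)d$ and eliminate $a,d$ using two well-chosen positions, so that every remaining $x_i^{l_i}$ becomes a $\mathbb{Z}$-linear combination of the two chosen perfect powers. For the length-six pattern $(2,5,5,2,2,2)$, taking the fifth-power positions $i=2,3$ as pivots yields
\begin{equation*}
x_1^2=2x_2^5-x_3^5,\quad x_4^2=2x_3^5-x_2^5,\quad x_5^2=3x_3^5-2x_2^5,\quad x_6^2=4x_3^5-3x_2^5,
\end{equation*}
and an entirely analogous reduction applies to every other admissible pattern, turning the combinatorial problem into a bundle of ternary equations of signature $(5,5,2)$ that must be incompatible.

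To dispatch a given system I would combine three tools. First, the Fermat--Euler theorem on four squares in arithmetic progression kills any pattern whose square-positions by themselves form a 4-term AP within the original progression. Second, each equation of the shape $x^2=\alpha X^5+\beta Y^5$ defines, after dehomogenisation, a hyperelliptic curve of genus $2$ on which I would either run classical Chabauty, after computing the Mordell--Weil rank of the Jacobian, or pass to an elliptic cover over a suitable quadratic field and apply elliptic Chabauty, following the strategy used in \cite{BGyHT}. Third, the ternary shape $(5,5,2)$ is a classical target of the modular method: one attaches a Frey--Hellegouarch curve to $\alpha X^5+\beta Y^5=\gamma Z^2$, applies Ribet's level lowering, and contradicts the existence of a corresponding weight-$2$ newform at the (small) conductor that results.

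The main obstacle I anticipate is the length-six case. There the system above gives \emph{four} simultaneous ternary equations in the pair $(x_2^5,x_3^5)$, and the individual genus-$2$ curves one extracts can have Jacobians whose Mordell--Weil rank equals the genus, so that direct Chabauty fails. In that event I would combine two of the equations (for instance, multiplying $x_4^2=2x_3^5-x_2^5$ and $x_5^2=3x_3^5-2x_2^5$) to produce a hyperelliptic curve of higher genus but with a Jacobian that splits, or factor the Jacobian of one of the basic curves over an appropriate quadratic field and run elliptic Chabauty on the resulting elliptic quotient. Once the length-six patterns are eliminated, the same elimination recipe, now applied to smaller systems and simpler curves, handles the length-five patterns and all length-four patterns outside $\{(2,2,2,5),(5,2,2,2)\}$, completing the proof.
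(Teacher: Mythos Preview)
Your bootstrap is circular. In the paper, the proof of Theorem~\ref{thm2n} explicitly \emph{invokes} Theorem~\ref{thm25} to dispose of the case $n=5$ (``In view of a result from \cite{BGyHT} about the case $n=3$ and Theorem~\ref{thm25}, without loss of generality we may assume that $n\geq 7$''). So you cannot quote Theorem~\ref{thm2n} with $n=5$ to begin your argument. If instead you intend to re-run the \emph{argument} of Theorem~\ref{thm2n} with $n=5$, it breaks: the exclusion of the subpatterns $(n,2,2,n,2)$ and $(2,n,2,2,n)$ there rests on the equation $X^n+4Y^n=3Z^2$, which Lemma~\ref{nn2} supplies only for $n\geq 7$. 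Without that input you do not even reach the restricted list $(2,5,5,2,2,2),(2,2,2,5,5,2)$ at $t=6$, so the whole top-down plan collapses at the first step.

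The paper proceeds in the opposite direction and more concretely: it works directly at $t=4$, uses Lemmas~\ref{nn2}, \ref{nnn} and Fermat--Euler to cut the exponent patterns down to a short list, and then kills each survivor by factoring the relevant ternary form over $\mathbb{Q}(\sqrt[5]{2})$ or $\mathbb{Q}(\sqrt{5})$ and applying elliptic Chabauty on an explicit genus-one curve (Lemmas~\ref{l25a} and~\ref{l25b}); non-extension to $t=5$ is then a one-line check. Your sketch gestures at Chabauty/modular tools but never commits to a specific curve or level computation, and the direction (from $t=6$ downward) only multiplies the cases you would have to treat. Even if you repair the circularity, you will still need to do essentially the same concrete work the paper does at $t=4$; starting higher buys you nothing.
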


\begin{theorem}
\label{thm3n}
Let $n$ be a prime and $x_1^{l_1},\hdots,x_t^{l_t}$ be a primitive
non-constant arithmetic progression in $\mathbb{Z}$ with $l_i\in\{3,n\}$
$(i=1,\hdots,t)$. Then we have $t\leq 4$. Further, if $t=4$ then
$$
(l_1,l_2,l_3,l_4)=(3,3,n,n),(n,n,3,3),(3,n,n,3),(n,3,3,n).
$$
\end{theorem}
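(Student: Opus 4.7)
I would proceed by a case analysis on the exponent pattern $(l_1,\ldots,l_t)\in\{3,n\}^t$, using as main Diophantine inputs (i) the Darmon--Merel theorem on primitive three-term arithmetic progressions of $m$-th powers ($m\geq 3$) and (ii) the modular-method resolution of several ternary equations of signatures $(3,3,n)$, $(3,n,3)$, $(n,n,3)$ that arise below. The degenerate cases $n=2,3$ reduce respectively to the $\{2,3\}$-analysis of \cite{BGyHT} and to Euler's theorem on cubes, both yielding bounds stronger than $t\leq 4$, so one may assume $n\geq 5$.

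The starting observation is that whenever three positions $i<j<k$ with $k-j=j-i=h$ carry the same exponent $m\in\{3,n\}$, the triple $x_i^m,x_j^m,x_k^m$ is itself a three-term AP of $m$-th powers. Using $\gcd(a_i,a_j)=\gcd(a_i,j-i)$ from primitivity, one checks $\gcd(x_i,x_j,x_k)=1$ for the small gaps $h\in\{1,2\}$ that I will need; Darmon--Merel then forces the triple to be trivial, which is incompatible with primitivity and nonconstancy of the full AP. In particular no three consecutive $l_i$'s may agree.

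For $t=4$ the only equally spaced length-three subsets of positions are $(1,2,3)$ and $(2,3,4)$, and the basic obstruction kills six of the sixteen patterns. For each of the ten survivors I would write the two AP relations $2x_j^{l_j}=x_{j-1}^{l_{j-1}}+x_{j+1}^{l_{j+1}}$, $j=2,3$, and eliminate the common difference to obtain a ternary equation: the pattern $(3,3,n,3)$ produces $x_1^3+x_3^n=2x_2^3$, the pattern $(3,n,3,n)$ produces $x_1^3+x_3^3=2x_2^n$ and $x_2^n+x_4^n=2x_3^3$, the pattern $(n,n,3,n)$ produces $x_1^n+x_3^3=2x_2^n$, and similarly for the other excluded patterns; each is of one of the three signatures above with coefficients in $\{1,2\}$. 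By the modular method (results of Kraus, Bennett--Vatsal--Yazdani, Dahmen, Chen and others) the only coprime solutions of such equations are trivial, and substituting these back forces the common difference to vanish, contradicting nonconstancy. Only the four patterns of the theorem survive. For $t\geq 5$, every four consecutive terms again form a primitive nonconstant AP (since $\gcd(a_j,a_{j+1})=\gcd(a_1,d)=1$), so every length-four window must lie in that list; compatibility of two overlapping windows leaves only the candidates $(3,3,n,n,3),(n,n,3,3,n),(3,n,n,3,3),(n,3,3,n,n)$. In each, the equally spaced triple $(1,3,5)$ supplies a further ternary equation already excluded in the length-four analysis (e.g.\ $x_1^3+x_5^3=2x_3^n$ in the first case), ruling out the pattern and giving $t\leq 4$.

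The principal difficulty is concentrated in the Diophantine input just invoked: for every prime $n\geq 5$ one needs the resolution up to trivial coprime solutions of all the listed ternary equations, with their small coefficients. These are not covered by a single uniform statement; for each coefficient triple an appropriate Frey curve must be set up, modularity and level lowering applied, and a finite list of small exponents (typically $n=5,7,11,13$) treated separately by explicit descent or Chabauty / Mordell--Weil sieve computations on associated curves of genus at least two. Assembling these ingredients coherently is where the bulk of the work will lie.
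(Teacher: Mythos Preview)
Your overall plan---case analysis on $(l_1,\ldots,l_t)$, exclusion of the patterns $(3,3,3)$, $(n,n,n)$, $(n,3,n)$ via Darmon--Merel and Bennett--Vatsal--Yazdani, then window-compatibility for $t\geq 5$---is exactly the paper's strategy. The gap is in the treatment of the two stubborn length-four patterns $(3,3,n,3)$ and $(3,n,3,3)$.

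For $(3,3,n,3)$ you write down $x_1^3+x_3^n=2x_2^3$ and propose to cite the literature. Two problems. First, the more useful relation is the other one, $x_2^3+x_4^3=2x_3^n$, which has the clean shape $X^3+Y^3=2Z^n$; your equation $2C^3-A^3=B^n$ is not covered by the Kraus/Bennett--Vatsal--Yazdani/Dahmen results you invoke. Second, even for $X^3+Y^3=2Z^n$ the full resolution for all primes $n\geq 5$ is \emph{not} available in the literature you cite. This is precisely where the paper does something new: it proves (as a self-contained lemma) that $X^3+Y^3=2Z^n$ has no nontrivial coprime solutions under the extra hypothesis $3\nmid Z$, by factoring $X^3+Y^3$ and reducing to an equation $V^n-U^{2n}=3W^2$ of signature $(n,n,2)$, which \emph{is} covered by Bennett--Skinner/Bruin. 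That partial ternary result is then combined with a local argument: if a nontrivial progression of type $(3,3,n,3)$ existed, the lemma forces $3\mid x_3$, and reading the whole progression modulo $9$ (where cubes lie in $\{0,\pm 1\}$) contradicts primitivity. The same combination disposes of $(3,n,3,3)$ and, in the $t=5$ step, of the extensions with $(l_1,l_3,l_5)=(3,n,3)$.

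So the missing idea is not ``assemble several Frey-curve papers'' but rather: isolate the single equation $X^3+Y^3=2Z^n$, prove it under the auxiliary condition $3\nmid Z$ by a short descent to signature $(n,n,2)$, and finish with a mod-$9$ argument exploiting the arithmetic-progression structure. Once you have that, no case-by-case work for small $n$ is needed.
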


Note that Theorems \ref{thm25} and \ref{thm3n} are almost best possible.
This is demonstrated by the primitive non-constant progression $-1,0,1$.
(In fact one can easily give infinitely many examples of arithmetic
progressions of length three, consisting of squares and fifth powers.)

We also remark that by a previously mentioned result from \cite{BGyHT},
the number of progressions of length at least four is finite in each
case occurring in the above theorems.

\section{Proofs of Theorems \ref{thm2n} and \ref{thm3n}}

In the proof of these theorems we need several results about ternary
equations of signatures $(n,n,2)$ and $(n,n,3)$, respectively. We start
this section with summarizing these statements. The first three lemmas
are known from the literature, while the fourth one is new.

\begin{lemma}
\label{nn2}
Let $n$ be a prime. Then the Diophantine equations
\begin{gather*}
X^n+Y^n=2Z^2\ \ \ (n\geq 5),\\
X^n+Y^n=3Z^2\ \ \ (n\geq 5),\\
X^n+4Y^n=3Z^2\ \ \ (n\geq 7)
\end{gather*}
have no solutions in nonzero pairwise coprime integers $(X,Y,Z)$
with $XY\neq\pm 1$.
\end{lemma}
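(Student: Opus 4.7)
The plan is not to reprove these statements from scratch but to identify them in the literature and verify that each falls within the scope of known results on ternary equations of signature $(n,n,2)$. All three equations have the form $AX^n+BY^n=CZ^2$ with very small coefficients $A,B,C\in\{1,2,3,4\}$, so each is a natural target for the modular method developed by Frey, Serre, Ribet, Wiles, Bennett, Skinner and others.

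The unified strategy runs as follows. Given a hypothetical pairwise coprime nonzero solution $(X,Y,Z)$ with $XY\neq\pm 1$, one attaches a Frey elliptic curve $E=E_{X,Y}$ over $\mathbb{Q}$ whose conductor and minimal discriminant can be read off from the solution (the precise model depends on the residues of $X,Y,Z$ modulo $2$ and $3$). By the modularity theorem the mod-$n$ Galois representation $\bar\rho_{E,n}$ arises from a weight-$2$ newform; the hypotheses $XY\neq\pm 1$ and $n\geq 5$ (resp.\ $n\geq 7$) are exactly what is needed to guarantee that $\bar\rho_{E,n}$ is absolutely irreducible, so that Ribet's level-lowering theorem applies. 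One then lands in a space of weight-$2$ cuspidal newforms at a small predicted level $N$, depending only on $A,B,C$ and on the local behaviour of $E$ at $2$ and $3$. A contradiction is extracted either from the vanishing of $S_2^{\mathrm{new}}(N)$, or, when $S_2^{\mathrm{new}}(N)$ is nonempty, by eliminating each candidate newform $f$ via a comparison of the traces of Frobenius $a_\ell(E)$ and $a_\ell(f)$ at a handful of small auxiliary primes $\ell\nmid nN$.

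Concretely, the equations $X^n+Y^n=2Z^2$ and $X^n+Y^n=3Z^2$ (both for $n\geq 5$) are settled by the systematic treatment of equations of signature $(n,n,2)$ in the work of Bennett and Skinner on ternary Diophantine equations via Galois representations and modular forms. The third equation $X^n+4Y^n=3Z^2$ is handled by the same kind of Frey-curve/level-lowering analysis in subsequent papers in this circle of ideas; the restriction $n\geq 7$ (rather than $n\geq 5$) arises because at the exponent $n=5$ there survives a newform at the predicted level whose associated Galois representation cannot be distinguished from $\bar\rho_{E,5}$ by the standard congruence arguments.

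The main obstacle is therefore not the high-level modular machinery, which is essentially automatic for equations of this shape, but the residual elimination step after level lowering: one must rule out each surviving newform at the small predicted level by explicit local computations at $2$, $3$ and a few auxiliary primes. Since all of this is carried out in the cited papers, the proof of the lemma reduces to assembling their conclusions.
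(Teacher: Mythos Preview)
Your proposal is correct and takes essentially the same approach as the paper: both treat the lemma as a citation result, appealing to the modular-method literature on ternary equations of signature $(n,n,2)$, with Bennett--Skinner handling the first two equations. The one point where you are vaguer than the paper is the third equation $X^n+4Y^n=3Z^2$: the paper attributes this specifically to Bruin's work on equations of signature $(n,n,2)$, rather than to unnamed ``subsequent papers in this circle of ideas''.
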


\begin{proof} The statement follows from results of Bennett and
Skinner \cite{BeSk}, and Bruin \cite{Nilsnn2}.
\end{proof}

\begin{lemma}
\label{nn3}
Let $n\geq 5$ be a prime. Then the Diophantine equation
$$
X^n+Y^n=2Z^3
$$
has no solutions in coprime nonzero integers $X,Y,Z$ with $XYZ\neq\pm 1$.
\end{lemma}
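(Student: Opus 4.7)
The plan is to prove this equation of signature $(n,n,3)$ by the modular method, along the lines developed by Bennett--Vatsal--Yazdani for ternary equations of the shape $Ax^p+By^p=Cz^3$. Let $(X,Y,Z)$ be a triple of coprime nonzero integers with $X^n+Y^n=2Z^3$ and $XYZ\ne\pm 1$; we seek a contradiction. Because $n$ is odd and $X^n+Y^n$ is even, $X+Y$ is even, hence $X,Y$ have the same parity; coprimality then forces both to be odd. This pins down the $2$-adic valuation of $X^n+Y^n$ and feeds directly into the conductor of the Frey curve below.

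Next I would attach to $(X,Y,Z)$ a Frey--Hellegouarch elliptic curve $E/\mathbb{Q}$ of the type tailored to signature $(p,p,3)$, i.e.\ a model whose minimal discriminant, up to controlled powers of $2$ and $3$, has the form $X^n\cdot Y^n\cdot Z^{a}$ times a unit, so that every prime $\ell\notin\{2,3\}$ dividing the conductor of $\bar\rho_{E,n}$ divides $XYZ$ to an $n$-th power and hence disappears after level lowering. Using modularity of $E$ over $\mathbb{Q}$, irreducibility of $\bar\rho_{E,n}$ (which for $n\geq 5$ follows from Mazur's classification of rational $n$-isogenies, after excluding the small isogeny classes by a direct check), and Ribet's level-lowering theorem, one obtains a weight-$2$ newform $f$ at some explicit level $N=2^a3^b$ such that $\bar\rho_{E,n}\simeq\bar\rho_{f,\mathfrak n}$ for some prime $\mathfrak n\mid n$ in the coefficient field of $f$.

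The final step is to eliminate every newform at the finitely many surviving levels. For most such $N$ the space $S_2^{\mathrm{new}}(\Gamma_0(N))$ is empty, giving an immediate contradiction; when a newform $f$ does exist, one compares $a_p(f)$ with the values $a_p(E)\bmod\mathfrak n$ forced by a potential solution, for enough small primes $p$ of good reduction, to exclude the congruence for all $n\geq 5$. The main obstacle throughout is the bookkeeping of the $2$- and $3$-adic conductor exponents of the Frey curve, which must be chosen so that the residual level is small enough for this elimination to go through uniformly in $n$. Since exactly this analysis has been completed in detail by Bennett, Vatsal and Yazdani for equations of signature $(p,p,3)$, the cleanest route is simply to invoke their classification, which directly covers $X^n+Y^n=2Z^3$ for $n\geq 5$ prime.
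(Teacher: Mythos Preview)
Your proposal is correct and coincides with the paper's own argument: the paper simply cites Bennett, Vatsal and Yazdani for the signature $(p,p,3)$ equation $X^n+Y^n=2Z^3$, and your sketch of the modular method is precisely an outline of their proof before you too invoke their result. No additional work beyond the citation is needed here.
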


\begin{proof} The result is due to Bennett, Vatsal and Yazdani
\cite{BeVaYa}.
\end{proof}

\begin{lemma}
\label{nnn}
Let $n\geq 3$ be a prime. Then the Diophantine equation
$$
X^n+Y^n=2Z^n
$$
has no solutions in coprime nonzero integers $X,Y,Z$ with $XYZ\neq\pm 1$.
\end{lemma}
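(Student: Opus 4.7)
The plan is to reduce the lemma to the theorem of Darmon and Merel, who settle this exact equation for all primes $n \geq 3$ via the modular method. I would organize the argument in two parts: a separate treatment of the small exponent $n = 3$, and the modular attack for $n \geq 5$.

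For $n = 3$ the projective curve $X^3 + Y^3 = 2Z^3$ has genus one and admits the obvious rational point $(1:1:1)$, so it is an elliptic curve over $\mathbb{Q}$. A classical $2$-descent shows that its Mordell--Weil group is finite and consists only of the three obvious rational points. Any coprime nonzero integer solution $(X, Y, Z)$ therefore satisfies $|X| = |Y| = |Z| = 1$, which forces $XYZ = \pm 1$ and contradicts the hypothesis.

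For $n \geq 5$, I would suppose, for contradiction, that a coprime nonzero solution $(X, Y, Z)$ with $XYZ \neq \pm 1$ exists. Following Frey--Hellegouarch one attaches to such a solution an elliptic curve $E/\mathbb{Q}$ built from the factorization of $X^n + Y^n = 2Z^n$ (after a normalization governed by the residues of $X, Y$ modulo small powers of $2$). The mod-$n$ Galois representation $\bar{\rho}_{E,n}$ is irreducible by Mazur's theorem on rational isogenies, and by modularity of elliptic curves over $\mathbb{Q}$ combined with Ribet's level-lowering theorem, $\bar{\rho}_{E,n}$ must arise from a cuspidal newform of weight $2$ at a very small level. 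A direct inspection of the corresponding space of newforms shows that no such form can produce the representation $\bar{\rho}_{E,n}$, yielding the desired contradiction.

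The main obstacle in carrying out this argument from scratch is precisely the deep modularity and level-lowering input, together with the irreducibility analysis and the enumeration of newforms at the relevant levels. Since this entire package is already assembled in the work of Darmon and Merel, the proof of the lemma reduces to quoting their theorem, in complete analogy with the citations used for Lemmas \ref{nn2} and \ref{nn3}.
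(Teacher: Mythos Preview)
Your proposal is correct and matches the paper's approach exactly: the paper simply attributes this lemma to Darmon and Merel \cite{intDM} with no further argument. Your additional exposition of the $n=3$ descent and the modular machinery for $n\geq 5$ is accurate but goes beyond what the paper does, which is a bare citation.
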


\begin{proof} The result is due to Darmon and Merel \cite{intDM}.
\end{proof}

\begin{lemma}
\label{33n}
Let $n\geq 3$ be a prime. Then the Diophantine equation
$$
X^3+Y^3=2Z^n
$$
has no solutions in coprime nonzero integers $X,Y,Z$ with $XYZ\neq\pm 1$
and $3\nmid Z$.
\end{lemma}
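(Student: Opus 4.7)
The plan is to use the factorisation $X^3 + Y^3 = (X+Y)(X^2 - XY + Y^2)$ to split the left-hand side into two coprime factors, extract an $n$-th power from each, and thereby reduce the problem to a ternary equation of signature $(n,n,2)$ that is already covered by Lemma \ref{nn2}. The case $n = 3$ is immediate from Lemma \ref{nnn}, so I assume $n \geq 5$ throughout.

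First I would verify that $X$, $Y$, $Z$ are pairwise coprime (this follows routinely from the equation together with $\gcd(X,Y,Z) = 1$) and then analyse the two factors on the left. Using $X^2 - XY + Y^2 = (X+Y)^2 - 3XY$ together with $\gcd(X,Y) = 1$, one sees that $\gcd(X+Y,\, X^2 - XY + Y^2)$ divides $3$; the value $3$ is ruled out by $3 \nmid Z$, since otherwise $9 \mid 2Z^n$ would force $3 \mid Z$. A parity check forces $X$ and $Y$ to be both odd: if their parities differed, both factors would be odd, contradicting $2 \mid 2Z^n$. Absorbing the full $2$-power of $2Z^n$ into the first factor (which is legitimate because $X^2 - XY + Y^2$ is odd) yields
\begin{equation*}
X + Y = 2A^n, \qquad X^2 - XY + Y^2 = B^n,
\end{equation*}
with $\gcd(A,B) = 1$, $AB = \pm Z$, $B > 0$, and $3 \nmid AB$.

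Next, setting $s = (X-Y)/2 \in \mathbb{Z}$ and using the identity
\begin{equation*}
X^2 - XY + Y^2 = \left(\tfrac{X+Y}{2}\right)^{2} + 3\left(\tfrac{X-Y}{2}\right)^{2}
\end{equation*}
gives $A^{2n} + 3 s^2 = B^n$. Since $n$ is odd, this rearranges to
\begin{equation*}
B^n + (-A^2)^n = 3 s^2,
\end{equation*}
which is precisely an equation to which Lemma \ref{nn2} applies, with $(X', Y', Z') = (B,\, -A^2,\, s)$. The pairwise coprimality of $B$, $-A^2$, $s$ reduces to $\gcd(A,B) = \gcd(A,s) = \gcd(B,s) = 1$, each immediate from $\gcd(X,Y) = 1$ and the relations above. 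The non-vanishing of $A$, $B$, $s$ is handled case by case: $A = 0$ yields $3X^2 = B^n$ and hence $3 \mid Z$; $B = 0$ forces $X = Y = 0$ by positive definiteness of $X^2 - XY + Y^2$; and $s = 0$ forces $X = Y = \pm 1$, whence $Z = \pm 1$, contradicting $XYZ \neq \pm 1$. Finally, $(-A^2)B = \pm 1$ would force $|A| = |B| = 1$ and again lead to $XYZ = \pm 1$.

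The main obstacle is really only this bookkeeping --- making sure that no degenerate triple $(A, B, s)$ slips past the hypotheses of Lemma \ref{nn2} --- because all the genuinely deep input (the modular arguments of Bennett--Skinner and Bruin) is already packaged into that lemma. No further modular or descent machinery is needed.
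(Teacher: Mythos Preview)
Your argument is correct and matches the paper's own proof essentially line for line: factor $X^3+Y^3$, use $3\nmid Z$ and parity to obtain $X+Y=2A^n$, $X^2-XY+Y^2=B^n$, and then reduce to $B^n-A^{2n}=3s^2$ to invoke Lemma~\ref{nn2}. The only cosmetic difference is that the paper reaches the final equation by requiring the discriminant of the quadratic $3X^2-6A^nX+(4A^{2n}-B^n)=0$ to be a square, whereas you use the identity $X^2-XY+Y^2=\bigl(\tfrac{X+Y}{2}\bigr)^2+3\bigl(\tfrac{X-Y}{2}\bigr)^2$ directly; these yield the same relation with $s=W=\tfrac{X-Y}{2}$.
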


\begin{proof} First note that in case of $n=3$ the statement follows
from Lemma \ref{nnn}. Let $n\geq 5$, and assume to the contrary that
$(X,Y,Z)$ is a solution
to the equation with $\gcd(X,Y,Z)=1$, $XYZ\neq \pm 1$ and $3\nmid Z$.
Note that the coprimality of $X,Y,Z$ shows that $XY$ is odd. We have
$$
(X+Y)(X^2-XY+Y^2)=2Z^n.
$$
Our assumptions imply that $\gcd(X+Y,X^2-XY+Y^2)\mid 3$, whence
$2\nmid XY$ and $3\nmid Z$ yield that
$$
X+Y=2U^n \text{ and } X^2-XY+Y^2=V^n
$$
hold, where $U,V\in\mathbb{Z}$ with $\gcd(U,V)=1$. Combining these
equations we get
$$
f(X):=3X^2-6U^nX+4U^{2n}-V^n=0.
$$ 
Clearly, the discriminant of $f$ has to be a square in $\mathbb{Z}$,
which leads to an equality of the form
$$  
V^n-U^{2n}=3W^2
$$
with some $W\in\mathbb{Z}$. However, this is impossible by Lemma
\ref{nn2}.
\end{proof}

Now we are ready to prove our Theorems \ref{thm2n} and \ref{thm3n}.

\begin{proof}[Proof of Theorem \ref{thm2n}] Suppose that we have an
arithmetic progression \eqref{mainap} of the desired form, with $t=6$.
In view of a result from \cite{BGyHT} about the case $n=3$ and Theorem
\ref{thm25}, without loss of generality we may assume that $n\geq 7$.

First note that the already mentioned classical result of Fermat and
Euler implies that we cannot have four consecutive squares in our
progression. Further, observe that Lemmas \ref{nn2} and \ref{nnn} imply
that we cannot have three consecutive terms with exponents $(n,2,n)$
and $(n,n,n)$, respectively, and further that $(l_1,l_3,l_5)=(n,2,n)$,
$(n,n,n)$ are also impossible.

If $(l_1,l_2,l_3,l_4,l_5)=(n,2,2,n,2)$ or $(2,n,2,2,n)$, then we have
$$
4x_4^n-x_1^n=3x_5^2\ \ \ \mbox{or}\ \ \ 4x_2^n-x_5^n=3x_1^2,
$$
respectively, both equations yielding a contradiction by Lemma
\ref{nn2}.

To handle the remaining cases, let $d$ denote the common difference
of the progression. Let $(l_1,l_2,l_3,l_4,l_5)=(2,2,n,2,2)$. Then (as
clearly $x_1\neq 0$) we have
$$
(1+X)(1+3X)(1+4X)=Y^2
$$
where $X=d/x_1$ and $Y=x_2 x_4 x_5 / x_1$. However, a simple calculation
with Magma \cite{MAGMA} shows that the rank of this elliptic curve is zero,
and it has exactly eight torsion points. However, none of these torsion
points gives rise to any appropriate arithmetic progression.

When $(l_1,l_2,l_3,l_4,l_5,l_6)=(2,2,n,n,2,2)$, then in a similar manner
we get
$$
(1+X)(1+4X)(1+5X)=Y^2
$$
with $X=d/x_1$ and $Y=x_2 x_5 x_6 / x_1$, and just as above, we get a
contradiction.

In view of the above considerations, a simple case-by-case analysis yields
that the only remaining possibilities are the ones listed in the theorem.
Hence to complete the proof we need only to show that the possible
six-term progressions cannot be extended to seven-term ones. Using
symmetry it is sufficient to deal with the case given by
$$
(l_1,l_2,l_3,l_4,l_5,l_6)=(2,n,n,2,2,2).
$$
However, one can easily verify that all the possible extensions lead to
a case treated before, and the theorem follows.
\end{proof}

\begin{proof}[Proof of Theorem \ref{thm3n}] In view of Lemma \ref{nnn}
and the previously mentioned result from \cite{BGyHT} we may suppose
that $n\geq 5$. Assume that we have an arithmetic progression of the
indicated form, with $t=4$. By the help of Lemmas \ref{nn3} and \ref{nnn}
we get that there cannot be three consecutive terms with exponents
$(n,3,n)$, and $(3,3,3)$ or $(n,n,n)$, respectively. Hence a simple
calculation yields that the only possibilities (except for the ones
listed in the theorem) are given by
$$
(l_1,l_2,l_3,l_4)=(3,n,3,3),(3,3,n,3).
$$
Then Lemma \ref{33n} yields that $3\mid x_2$ and $3\mid x_3$,
respectively. However, looking at the progressions modulo $9$ and using
that $x^3\equiv 0,\pm 1\pmod{9}$ for all $x\in{\mathbb Z}$ we get a
contradiction with the primitivity condition in both cases.

Finally, one can easily check that the extensions of the four-term
sequences corresponding to the exponents listed in the statement to
five-term ones, yield cases which have been treated already. Hence the
proof of the theorem is complete.
\end{proof}

\section{Proof of Theorem \ref{thm25}}

To prove this theorem we need some lemmas, obtained by the help of
elliptic Chabauty's method.

\begin{lemma}
\label{l25a}
Let $\alpha=\sqrt[5]{2}$ and put $K=\mathbb{Q}(\alpha)$. Then the
equations
\begin{equation}
\label{e25a1}
C_1:\quad\alpha^4X^4+\alpha^3X^3+\alpha^2X^2+\alpha X+1=(\alpha-1)Y^2
\end{equation}
and
\begin{equation}
\label{e25a2}
C_2:\quad\alpha^4X^4-\alpha^3 X^3+\alpha^2X^2-\alpha X+1=
(\alpha^4-\alpha^3+\alpha^2-\alpha+1)Y^2
\end{equation}
in $X\in\mathbb{Q}$, $Y\in K$ have the only solutions
$$
(X,Y)=(1,\pm(\alpha^4+\alpha^3+\alpha^2+\alpha+1)),\
\left(-\frac{1}{3},\pm\frac{3\alpha^4+5\alpha^3-\alpha^2+3\alpha+5}{9}\right)
$$
and $(X,Y)=(1,\pm 1)$, respectively.
\end{lemma}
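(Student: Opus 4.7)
The plan is to treat each of $C_1$ and $C_2$ as a genus-one curve defined over the quintic field $K=\mathbb{Q}(\alpha)$ and to apply the elliptic Chabauty method (due to Bruin) to determine all points $(X,Y)$ with $X\in\mathbb{Q}$ and $Y\in K$. Both equations have the form $cY^2=f_i(X)$ with $f_i\in K[X]$ of degree four, and in each case a $K$-rational point is already visible, namely $(1,\pm(\alpha^4+\alpha^3+\alpha^2+\alpha+1))$ on $C_1$ and $(1,\pm 1)$ on $C_2$. The first step is to use such a point to transform $C_i$ into Weierstrass form, producing an elliptic curve $E_i/K$ birational to $C_i$, under which the coordinate $X$ becomes a degree-two morphism $x_i\colon E_i\to\mathbb{P}^1_K$.

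The core step is the Chabauty argument applied to $x_i$: I choose a prime $\mathfrak{p}$ of $K$ of good reduction for $E_i$, lift a basis of $E_i(K)/E_i(K)_{\mathrm{tors}}$ to the formal group at $\mathfrak{p}$, and derive $\mathfrak{p}$-adic analytic conditions whose simultaneous solutions contain the $X$-values lying in $\mathbb{Q}$. This yields an effective, provably complete finite list, provided the Mordell--Weil rank inequality
\[
\mathrm{rank}\, E_i(K)<[K:\mathbb{Q}]=5
\]
holds. The whole procedure is implemented in Magma \cite{MAGMA}; once the finite list of admissible $X\in\mathbb{Q}$ is extracted, I substitute back into the defining equation of $C_i$ to recover the corresponding $Y$-values and verify that they coincide with those listed in the lemma.

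The principal obstacle is the arithmetic input to Chabauty, namely exhibiting explicit generators for $E_i(K)$ together with a proof that its rank is strictly less than five. Over a degree-five field the two-descent used to bound the $2$-Selmer group is considerably heavier than over $\mathbb{Q}$, since it requires class group and unit group information in auxiliary number fields of fairly large degree. I anticipate, however, that each $\mathrm{rank}\, E_i(K)$ turns out to be at most one, in which case the Chabauty inequality holds with substantial slack and the method succeeds at a small prime of $K$. Should a direct rank computation fail, one would have to supplement Magma's descent with a covering collection or a Mordell--Weil sieve, but for the curves at hand I expect this not to be necessary.
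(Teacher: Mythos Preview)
Your proposal is correct and follows essentially the same route as the paper: transform each $C_i$ to a Weierstrass model $E_i/K$ and run Bruin's elliptic Chabauty in Magma to pin down the rational $X$-values. The only point to adjust is your rank expectation: in fact $\mathrm{rank}\,E_i(K)=2$ for both $i$ (still comfortably below $[K:\mathbb{Q}]=5$), and the paper carries out the Chabauty step at $p=3$ for $C_1$ and $p=7$ for $C_2$.
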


\begin{proof} 
Using the so-called elliptic Chabauty's method (see \cite{NB1},
\cite{NB2}) we determine all points on the above curves for which $X$
is rational. The algorithm is implemented by N. Bruin in Magma, so here
we indicate the main steps only, the actual computations can be carried
out by Magma. We can transform $C_1$ to Weierstrass form
$$
E_1:\quad x^3-(\alpha^2+1)x^2-(\alpha^4+4\alpha^3-4\alpha-5)x+
(2\alpha^4-\alpha^3-4\alpha^2-\alpha+4)=y^2.
$$
The torsion subgroup of $E_1$ consists of two elements. Moreover, the
rank of $E_1$ is two, which is less than the degree of the number field
$K$. Applying elliptic Chabauty (the procedure "Chabauty" of Magma) with
$p=3$, we obtain that $X\in\{1,-1/3\}$.

In case of $C_2$ a similar procedure works. Now the corresponding
elliptic curve $E_2$ is of rank two. Applying elliptic Chabauty this time
with $p=7$, we get that $X=1$, and the lemma follows.
\end{proof}

\begin{lemma}
\label{l25b}
Let $\beta=(1+\sqrt{5})/2$ and put $L=\mathbb{Q}(\beta)$. Then the
only solutions to the equation
\begin{equation}
\label{e25b}
C_3:\quad X^4+(8\beta-12)X^3+(16\beta-30)X^2+(8\beta-12)X+1=Y^2
\end{equation}
in $X\in\mathbb{Q}$, $Y\in L$ are $(X,Y)=(0,\pm 1)$.
\end{lemma}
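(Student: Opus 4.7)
The proof will closely mirror that of Lemma \ref{l25a}, applying the elliptic Chabauty method of Bruin over the number field $L = \mathbb{Q}(\beta)$ of degree $2$ over $\mathbb{Q}$. The defining equation of $C_3$ is $Y^2 = f(X)$ with $f$ a separable quartic in $X$, so $C_3$ is a smooth genus-$1$ curve over $L$. The constant term of $f$ is $1$, so $(X,Y)=(0,1) \in C_3(L)$, providing a base-point for a standard birational transformation of $C_3$ into a Weierstrass model $E_3/L$.

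Using Magma, one then computes the torsion subgroup $E_3(L)_{\mathrm{tors}}$ and, crucially, the Mordell--Weil rank $r = \operatorname{rank} E_3(L)$ together with a set of generators of the free part. For elliptic Chabauty to apply one needs the rank condition
\[
r \;<\; [L:\mathbb{Q}] \;=\; 2,
\]
i.e.\ $r \le 1$; this is the key input to verify. Granting this, one invokes Magma's \texttt{Chabauty} routine on the degree-$1$ map $C_3 \to \mathbb{P}^1$, $(X,Y) \mapsto X$, at a suitably chosen prime $p$ of good reduction for $E_3$ (analogous to the choices $p=3$ and $p=7$ used for $C_1$ and $C_2$ in Lemma \ref{l25a}). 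The procedure returns a finite set of candidate rational values of $X$; substituting each back into \eqref{e25b} and checking whether $f(X)$ is a square in $L$ isolates the genuine solutions.

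The main obstacle is exactly the rank computation and the determination of generators: without a proven upper bound $r \le 1$ the Chabauty routine cannot yield a conclusive answer, and in a quadratic field this can be delicate if the $2$-Selmer group is large. Here one has a useful symmetry to exploit, namely that $f$ is palindromic (the coefficient sequence $1,\, 8\beta-12,\, 16\beta-30,\, 8\beta-12,\, 1$ reads the same in both directions), so the involution $X \mapsto 1/X$ acts on $C_3$ and descends to a rational $2$-isogeny on $E_3$. This should make a $2$-isogeny descent feasible and is the natural route to certifying $r \le 1$; if the rank of $E_3(L)$ itself turned out to be $2$, one would instead run elliptic Chabauty on the $2$-isogenous curve, whose rank over $L$ is expected to drop. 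Once the rank bound is in place, the remainder of the argument is a routine machine computation that recovers the claimed list $(X,Y) = (0, \pm 1)$.
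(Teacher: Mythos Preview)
Your approach is essentially the one the paper takes. The paper records the concrete data you leave unspecified: an explicit Weierstrass model
\[
E_3:\quad y^2 = x^3-(\beta-1)x^2-(\beta+2)x+2\beta,
\]
torsion of order four, rank exactly one with $(\beta-1,1)$ a point of infinite order, and the prime $p=13$ for the Chabauty computation. So the rank condition $r<[L:\mathbb{Q}]=2$ is indeed met and the method goes through directly.

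One small correction to your side remark: if the rank of $E_3(L)$ had turned out to be $2$, passing to a $2$-isogenous curve would not help, since isogenous elliptic curves over the same field have the same Mordell--Weil rank. (The palindromic symmetry you observe is still useful for organizing the descent, but it cannot lower the rank.) Fortunately this contingency does not arise here.
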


\begin{proof} The proof is similar to that of Lemma \ref{l25a}.
We can transform $C_3$ to Weierstrass form
$$
E_3:\quad x^3-(\beta-1)x^2-(\beta+2)x+2\beta=y^2.
$$
The torsion group of $E_3$ consists of four points and $(x,y)=(\beta-1,1)$
is a point of infinite order. Applying elliptic Chabauty with $p=13$, we
obtain that $(X,Y)=(0,\pm 1)$ are the only affine points on $C_3$ with
rational first coordinates.
\end{proof}

Now we can give the

\begin{proof}[Proof of Theorem \ref{thm25}]
Suppose that we have a four-term progression of the desired form. Then
by Lemmas \ref{nn2}, \ref{nnn} and the result of Fermat and Euler we
obtain that all the possibilities (except for the ones given in the
statement) are
$$
(l_1,l_2,l_3,l_4)=(2,2,5,5),(5,5,2,2),(2,5,5,2),
$$
$$
(5,2,2,5),(2,2,5,2),(2,5,2,2).
$$
We show that these possibilities cannot occur. Observe that by symmetry
we may assume that we have
$$
(l_1,l_2,l_3,l_4)=(2,2,5,5),(2,5,5,2),(5,2,2,5),(2,2,5,2).
$$
In the first two cases the progression has a sub-progression of the shape
$a^2,b^5,c^5$. Note that here $\gcd(b,c)=1$ and $bc$ is odd. Indeed, if $c$
would be even then we would get $4\mid a^2,c^5$, whence it would follow that
$b$ is even - a contradiction. Taking into consideration the fourth term of
the original progression, a similar argument shows that $b$ is also odd.
Using this subprogression we obtain the equality $2b^5-c^5=a^2$. Putting
$\alpha=\sqrt[5]{2}$ we get the factorization
\begin{equation}
\label{fact}
(\alpha b-c)(\alpha^4b^4+\alpha^3b^3c+\alpha^2b^2c^2+\alpha bc^3+c^4)=a^2
\end{equation}
in $K=\mathbb{Q}(\alpha)$. Note that the class number of $K$ is one,
$\alpha^4,\alpha^3,\alpha^2,\alpha,1$ is an integral basis of $K$,
$\varepsilon_1=\alpha-1$, $\varepsilon_2=\alpha^3+\alpha+1$ provides
a system of fundamental units of $K$ with
$N_{K/\mathbb{Q}}(\varepsilon_1)=N_{K/\mathbb{Q}}(\varepsilon_2)=1$, and
the only roots of unity in $K$ are given by $\pm 1$. A simple calculation
shows that
$$
D:=\gcd(\alpha b-c,\alpha^4 b^4+\alpha^3 b^3c+\alpha^2 b^2c^2+
\alpha bc^3+c^4)\mid\gcd(\alpha b-c,5\alpha bc^3)
$$
in the ring of integers $O_K$ of $K$. Using $\gcd(b,c)=1$ and $2\nmid c$
in $\mathbb{Z}$, we get $D\mid 5$ in $O_K$. Using e.g. Magma, one
can easily check that
$5=(3\alpha^4+4\alpha^3-\alpha^2-6\alpha-3)(\alpha^2+1)^5$, where
$3\alpha^4+4\alpha^3-\alpha^2-6\alpha-3$ is a unit in $K$, and
$\alpha^2+1$ is a prime in $O_K$ with
$N_{K/\mathbb{Q}}(\alpha^2+1)=5$. By the help of these information, we
obtain that
$$
\alpha b-c=(-1)^{k_0}(\alpha-1)^{k_1}(\alpha^3+\alpha+1)^{k_2}
(\alpha^2+1)^{k_3} z^2
$$
with $k_0,k_1,k_2,k_3\in\{0,1\}$ and $z\in O_K$. Taking the norms of
both sides of the above equation, we get that $k_0=k_3=0$. Further, if
$(k_1,k_2)=(0,0),(1,1),(0,1)$ then putting
$z=z_4\alpha^4+z_3\alpha^3+z_2\alpha^2+z_1\alpha+z_0$ with
$z_i\in\mathbb{Z}$ $(i=0,\hdots,4)$ and expanding the right hand side
of the above equation, we get $2\mid b$, which is a contradiction.
(Note that to check this assertion, in case of $(k_1,k_2)=(0,1)$ one
can also use that the coefficients of $\alpha^2$ and $\alpha^3$ on the
left hand side are zero.) Hence we may conclude that $(k_1,k_2)=(1,0)$.
Thus using \eqref{fact} we get that
$$
\alpha^4b^4+\alpha^3b^3c+\alpha^2b^2c^2+\alpha bc^3+c^4=(\alpha-1)y^2
$$
with some $y\in O_K$. Hence after dividing this equation by $c^4$ (which
cannot be zero), we get \eqref{e25a1}, and then a contradiction by Lemma
\ref{l25a}. Hence the first two possibilities for $(l_1,l_2,l_3,l_4)$
are excluded.

Assume next that $(l_1,l_2,l_3,l_4)=(5,2,2,5)$. Then we have
$2x_1^5+x_4^5=3x_2^2$. Using the notation of the previous paragraph, we
can factorize this equation over $K$ to obtain
\begin{equation}
\label{fact2}
(\alpha x_1+x_4)
(\alpha^4x_1^4-\alpha^3x_1^3x_4+\alpha^2x_1^2x_4^2-\alpha x_1x_4^3+x_4^4)
=3x_2^2.
\end{equation}
Observe that the primitivity condition implies that $\gcd(x_1,x_4)=1$,
and $2\nmid x_1x_4$. Hence in the same manner as before
we obtain that the greatest common divisor of the terms on the left hand
side of \eqref{fact2} divides $5$ in $O_K$. Further, a simple calculation
e.g. with Magma yields that
$3=(\alpha+1)(\alpha^4-\alpha^3+\alpha^2-\alpha+1)$, where $\alpha+1$ and
$\alpha^4-\alpha^3+\alpha^2-\alpha+1$ are primes in $O_K$ with
$N_{K/\mathbb{Q}}(\alpha+1)=3$ and
$N_{K/\mathbb{Q}}(\alpha^4-\alpha^3+\alpha^2-\alpha+1)=81$, respectively.
Using these information we can write
$$
\alpha x_1+x_4=(-1)^{k_0}(\alpha-1)^{k_1}(\alpha^3+\alpha+1)^{k_2}
(\alpha+1)^{k_3}(\alpha^4-\alpha^3+\alpha^2-\alpha+1)^{k_4} z^2
$$
with $k_0,k_1,k_2,k_3,k_4\in\{0,1\}$ and $z\in O_K$. Taking the norms of
both sides of the above equation, we get that $k_0=0$ and $k_3=1$.
Observe that $k_4=1$ would imply $3\mid x_1,x_4$. This is a contradiction,
whence we conclude $k_4=0$. Expanding the above equation as previously,
we get that if $(k_1,k_2)=(0,1),(1,0),(1,1)$ then $x_1$ is even, which is
a contradiction again. (To deduce this assertion, when $(k_1,k_2)=(1,1)$
we make use of the fact that the coefficients of $\alpha^3$ and $\alpha^2$
vanish on the left hand side.) So we have $(k_1,k_2)=(0,0)$, which by the
help of \eqref{fact2} implies
$$
\alpha^4x_1^4-\alpha^3x_1^3x_4+\alpha^2x_1^2x_4^2-\alpha x_1x_4^3+x_4^4=
(\alpha^4-\alpha^3+\alpha^2-\alpha+1)y^2
$$
with some $y\in O_K$. However, after dividing this equation by $x_1^4$
(which is certainly non-zero), we get \eqref{e25a2}, and then a
contradiction by Lemma \ref{l25a}.

Finally, suppose that $(l_1,l_2,l_3,l_4)=(2,2,5,2)$. Using the identity
$x_2^2+x_4^2=2x_3^5$, e.g. by the help of a result of Pink and Tengely
\cite{PT} we obtain
$$
x_2=u^5-5u^4v-10u^3v^2+10u^2v^3+5uv^4-v^5
$$
and
$$
x_4=u^5+5u^4v-10u^3v^2-10u^2v^3+5uv^4+v^5
$$
with some coprime integers $u,v$. Then the identity
$3x_2^2-x_4^2=2x_1^2$ implies
\begin{equation}
\label{fact3}
(u^2-4uv+v^2)f(u,v)=x_1^2
\end{equation}
where
$$
f(u,v)=u^8-16u^7v-60u^6v^2+16u^5v^3+134u^4v^4+
$$
$$
+16u^3v^5-60u^2v^6-16uv^7+v^8.
$$
A simple calculation shows that the common prime divisors of the
terms at the left hand side belong to the set $\{2,5\}$. However,
$2\mid x_1$ would imply $4\mid x_1^2,x_3^5$, which would violate the
primitivity condition. Further, if $5\mid x_1$ then looking at the
progression modulo $5$ and using that by the primitivity condition
$x_2^2\equiv x_4^2\equiv \pm 1\pmod{5}$ should be valid, we get a
contradiction. Hence the above two terms are coprime, which yields
that
$$
f(u,v)=w^2
$$
holds with some $w\in\mathbb{Z}$. (Note that a simple consideration
modulo $4$ shows that $f(u,v)=-w^2$ is impossible.) Let
$\beta=(1+\sqrt{5})/2$, and put $L=\mathbb{Q}(\beta)$. As is
well-known, the class number of $L$ is one, $\beta,1$ is an integral
basis of $L$, $\beta$ is a fundamental unit of $L$ with
$N_{L/\mathbb{Q}}(\beta)=1$, and the only roots of unity in $L$ are
given by $\pm 1$. A simple calculation shows that
$$
f(u,v)=g(u,v)h(u,v)
$$
with
$$
g(u,v)=u^4+(8\beta-12)u^3v+(16\beta-30)u^2v^2+(8\beta-12)uv^3+v^4
$$
and
$$
h(u,v)=u^4+(-8\beta-4)u^3v+(-16\beta-14)u^2v^2+(-8\beta-4)uv^3+v^4.
$$
Further, $\gcd(6,x_1)=1$ by the primitivity of the progression, and one
can easily check modulo $5$ that $5\mid x_1$ is also impossible. Hence
we conclude that $g(u,v)$ and $h(u,v)$ are coprime in the ring $O_L$
of integers of $L$. Thus we have
$$
g(u,v)=(-1)^{k_0} \beta^{k_1} z^2
$$
with some $k_0,k_1\in\{0,1\}$ and $z\in O_L$. Note that as
$2\nmid x_1$, equation \eqref{fact3} implies that exactly one of $u,v$
is even. Hence a simple calculation modulo $4$ shows that the only
possibility for the exponents in the previous equation is
$k_0=k_1=0$. However, then after dividing the equation with $v^4$
(which cannot be zero), we get \eqref{e25b}, and then a contradiction
by Lemma \ref{l25b}.

There remains to show that a four-term progression with exponents
$(l_1,l_2,l_3,l_4)=(2,2,2,5)$ or $(5,2,2,2)$ cannot be extended to a
five-term one. By symmetry it is sufficient to deal with the first case.
If we insert a square or a fifth power after the progression, then the
last four terms yield a progression which has been already excluded.
Writing a fifth power, say $x_0^5$ in front of the progression would
give rise to the identity $x_0^5+x_4^5=2x_2^2$, which leads to a
contradiction by Lemma \ref{nn2}. Finally, putting a square in front
of the progression is impossible by the already mentioned result of
Fermat and Euler. 
\end{proof}

\section{Acknowledgement}
The research of the first author was supported in part by the National
Office for Research and Technology.


\begin{thebibliography}{88}

\bibitem{BeSk}
M. A. Bennett and C. M. Skinner,
\newblock Ternary Diophantine equations via Galois representations and
modular forms,
\newblock {\em Canad J. Math.} 56 (2004), 23--54.

\bibitem{BeVaYa}
M. A. Bennett, V. Vatsal and S. Yazdani,
\newblock Ternary Diophantine equations of signature $(p,p,3)$,
\newblock {\em Compositio Math. Soc.} 140 (2004), 1399--1416.

\bibitem{MAGMA}
W. Bosma, J. Cannon and C. Playoust,
\newblock The Magma algebra system. I. The user language,
\newblock {\em J. Symbolic Comput.} 24 (1997), 235--265.

\bibitem{NB1}
N. Bruin,
\newblock {\em Chabauty methods and covering techniques applied to
generalized Fermat equations,}
\newblock CWI Tract, Vol. 133, Stichting Mathematisch Centrum,
Centrum voor Wiskunde en Informatica, Amsterdam, 2002.

\bibitem{NB2}
N. Bruin,
\newblock Chabauty methods using elliptic curves,
\newblock {\em J. Reine Angew. Math.} 562 (2003), 27--49.

\bibitem{Nilsnn2}
N. Bruin,
\newblock Diophantine equations of signature $(n,n,2)$,
\newblock In: {\em Discovering mathematics wit Magma, Algorithms Comput.
Math.} 19 (2006), 63--91.

\bibitem{BGyHT}
N. Bruin, K. Gy\H{o}ry, L. Hajdu and Sz. Tengely,
\newblock Arithmetic progressions consisting of unlike powers,
\newblock {\em Indag. Math.} 17 (2006), 539--555.

\bibitem{DaGr}
H. Darmon and A. Granville,
\newblock On the equations $z^m=F(x,y)$ and $Ax^p+By^q=Cz^r$,
\newblock {\em Bull. London Math. Soc.} 27 (1995), 81--100.

\bibitem{intDM}
H. Darmon and L. Merel,
\newblock Winding quotients and some variants of Fermat's Last Theorem,
\newblock {\em J. Reine Angew. Math.} 490 (1997), 81--100.

\bibitem{Dickson}
L. E. Dickson,
\newblock History of the theory of numbers. Vol. II: Diophantine analysis,
\newblock Chelsea Publishing Co., New York 1966.

\bibitem{Lajos}
L. Hajdu,
\newblock Perfect powers in arithmetic progression. A note on the
inhomogeneous case,
\newblock {\em Acta Arith.} 113 (2004), 343--349.

\bibitem{Mo}
L. J. Mordell,
\newblock Diophantine equations,
\newblock Pure and Applied Mathematics, Vol. 30, Academic Press, London 1969.

\bibitem{PT}
I. Pink and Sz. Tengely,
\newblock Full powers in arithmetic progressions,
\newblock {\em Publ Math. Debrecen} 57 (2000), 535--545.

\bibitem{T}
Sz. Tengely,
\newblock On the Diophantine equation $x^2+a^2=2y^p$,
\newblock {\em Indag. Math.} 15 (2004), 291--304.

\end{thebibliography}
\end{document}